\newtoks\prt
\numberwithin{equation}{section}
\newtheorem{thm}{Theorem}[section]
\newtheorem{lemma}[thm]{Lemma}
\newtheorem{prop}[thm]{Proposition}
\newtheorem{cor}[thm]{Corollary}
\theoremstyle{definition}
\def\eqn#1$$#2$${\begin{equation}\label#1#2\end{equation}}
\def\C{\mathcal C}
\def\E{\mathcal E}
\def\e{e^\ast}
\def\E{E^\ast}
\def\ep{\varepsilon}
\def\en{\mathbb N}
\def\er{\mathbb R}
\def \reg {\partial _{\kern1pt\text{reg}}}
\def\la{\langle}
\def\ra{\rangle}
\newcommand{\norm}[1]{\left\|#1\right\|}
\newcommand{\abs}[1]{\left| #1  \right|}
\renewcommand\qedsymbol{$\blacksquare$}
\begin{document}

	\title[Isomorphisms of $\C(K, E)$ spaces and height of $K$]
	{Isomorphisms of $\C(K, E)$ spaces and height of $K$}

	\author[J.~Rondo\v s]{Jakub Rondo\v s}
	\address[J.~Rondo\v s]{Charles University\\
		of Mathematics and Physics\\
		Department of Mathematical Analysis \\
		Sokolovsk\'{a} 83, 186 \ 75\\Praha 8, Czech Republic}
	
	\email{jakub.rondos@gmail.com}
	
	\author[J.~Somaglia]{Jacopo Somaglia}
	\address[J.~Somaglia]{Dipartimento di Scienze economiche e Aziendali\\  \\Universit\`a degli Studi di Pavia\\Via San Felice 5,
		27100 Pavia, Italy}
	\email{jacopo.somaglia@unipv.it}

	\subjclass[2020]{46E15, 46B03, 47B38, 54D30.}

	\keywords{$C(K, E)$ space, Banach-Mazur distance, isomorphic classification, height}
	
	\thanks{}

	\begin{abstract}
		Let $K_1$, $K_2$ be compact Hausdorff spaces and $E_1, E_2$ be Banach spaces not containing a copy of $c_0$. We establish lower estimates of the Banach-Mazur distance between the spaces of continuous functions $\C(K_1, E_1)$ and $\C(K_2, E_2)$ based on the ordinals $ht(K_1)$, $ht(K_2)$, which are new even for the case of spaces of real valued functions on ordinal intervals. As a corollary we deduce that $\C(K_1, E_1)$ and $\C(K_2, E_2)$ are not isomorphic if $ht(K_1)$ is substantially different from $ht(K_2)$.
	\end{abstract}
	
	\maketitle

	
	\section{Introduction}

	We first recall several notions. To start with, for a compact space $K$ and a Banach space $E$, let $\C(K,E)$ denote the space of all continuous $E$-valued functions endowed with the sup-norm. We write $\C(K)$ for $\C(K, \er)$. All compact spaces are assumed to be Hausdorff.
	
	Next we recall that the \emph{Banach-Mazur distance} of Banach spaces $E_1$, $E_2$ is defined to be the infimum of $\norm{T}\norm{T^{-1}}$ over the set of all isomorphisms $T:E_1 \rightarrow E_2$ and is denoted by $d_{BM}(E_1, E_2)$.
	
	Further, the \emph{derivative} of a topological space $S$ is defined recursively as follows. The set $S^{(1)}$ is the set of accumulation points of $S$, and for an ordinal $\alpha>1$, let $S^{(\alpha)}=(S^{\beta})^{(1)}$, if $\alpha=\beta+1$, and $S^{(\alpha)}=\bigcap_{\beta<\alpha} S^{(\beta)}$, if $\alpha$ is a limit ordinal. Moreover, let $S^{(0)}=S$. The topological space $S$ is called \emph{scattered} if there exists an ordinal $\alpha$ such that $S^{(\alpha)}$ is empty, and minimal such $\alpha$ is called the \emph{height} of $S$ (or the Cantor-Bendixon index of $S$) and is denoted by $ht(S)$. If $S$ is not scattered, then we define $ht(S)$ to be $\infty$.  We use the convention that $\alpha< \infty$ for each ordinal $\alpha$.
	If $K$ is a scattered compact space, then $ht(K)$ is always a successor ordinal.
	
	Further, throughout the paper, the symbol $\en$ will stand for the set of positive integers. Next we recall that a gamma number is an ordinal which is greater than the sum of any two lesser ordinals. Equivalently, gamma numbers are either $0$ or ordinals of the form $\omega^{\alpha}$ for some ordinal $\alpha$, see \cite{monk_set_theory}. Given an ordinal $\alpha$, we let $\Gamma(\alpha)$ denote the minimum gamma number which is not less than $\alpha$. Since $\omega^{\alpha} \geq \alpha$ for any ordinal $\alpha$, this minimum exists. For completeness, let $\Gamma(\infty)=\infty$.
	
	In this paper we study the isomorphism classes and distances between $\C(K, E)$ spaces. Let us begin with recalling known results in this area. 
	
	First we recall that the classical isomorphic classification due to Bessaga and Pe\l czy\'{n}ski \cite{BessagaPelcynski_classification} together with the Milutin theorem \cite[Theorem 2.1]{RosenthalC(K)} can be formulated in the way that for two compact metric spaces $K_1, K_2$, the Banach spaces of continuous functions $\C(K_1)$ and $\C(K_2)$ are isomorphic if and only if $\Gamma(ht(K_1))=\Gamma(ht(K_2))$. In the case of nonmetrizable compact spaces, such a classification is no longer true even for the case of ordinal intervals, since there other properties of compact spaces that are preserved by isomorphisms of the corresponding spaces of continuous functions, e.g., cardinality (see \cite{cengiz}). For example, $\Gamma(ht([0, \omega_1)])=\Gamma(ht([0, \omega_1 2]))$, but the space $\C([0, \omega_1])$ is not isomorphic to $\C([0, \omega_1 2])$, see \cite{Semadeni196081}. On the other hand, the condition $\Gamma(ht(K_1))=\Gamma(ht(K_2))$ still remains necessary for the spaces $\C(K_1)$ and $\C(K_2)$ to be isomorphic, which follows from the computation of the Szlenk index of a $\C(K)$ space due to Causey \cite{CAUSEY_C(K)_index}, and the fact that the Szlenk index is preserved by isomorphisms of Banach spaces.
	
	Now we turn our attention to case of spaces of vector-valued functions. 
	The isomorphic classification of $\C(K, E)$ spaces for certain Banach spaces $E$ and metrizable compact spaces $K$ has been treated in \cite{Galego_vector_ordinals}, \cite{GALEGO_vector_classification_factor}, and \cite{GALEGO_Zahn_classification}. In \cite{GALEGO_Zahn_classification}, the authors prove that if a Banach space $E$ has some uniformly convex $\omega_1$-quotient (see \cite[Definition 2.1]{GALEGO_Zahn_classification}), in particular, if $E$ is uniformly convex (see \cite[Remark 2.3]{GALEGO_Zahn_classification}), then for all infinite compact metric spaces $K_1$ and $K_2$, $\C(K_1, E)$ is isomorphic to $\C(K_2, E)$ if and only if $\C(K_1)$ is isomorphic to $\C(K_2)$. In the area of nonmetrizable compact spaces, there is much less known. It was proved in \cite[Theorem 1.4]{CandidoScattered} that if $E$ is a Banach space not containing an isomorphic copy of $c_0$, $K_2$ is a scattered compact space and $\C(K_1)$ embeds isomorphically into $\C(K_2, E)$, then $K_1$ is also scattered. Also, the Szlenk index of a general $\C(K, E)$ space has been computed in \cite[Theorem 1.4]{Causey_szlenk_hulls}, it is simply the maximum of Szlenk indices of $\C(K)$ and $E$. This important result, however, clearly cannot be used to deduce that the compact spaces $K_1$, $K_2$ have similar properties whenever the spaces $\C(K_1, E), \C(K_2, E)$ are isomorphic, in the case when $Sz(E) \geq \max \{Sz(\C(K_1)), Sz(\C(K_2))\}$, in particular, if $E$ is not an Asplund space.
	
	Now, let us recall the known lower estimates of the Banach-Mazur distance between a couple of $\C(K, E)$ spaces. We begin with the classical Amir-Cambern theorem, which asserts that, for nonhomeomorphic compact spaces $K_1$ and $K_2$, the distance between $\C(K_1)$ and $\C(K_2)$ is at least $2$. Vector-valued extensions of this result were subsequently proven by several authors, see e.g. the important paper \cite{cidralgalegovillamizar}. Other than that, there have been proved better estimates based on the properties of derived sets of the compact spaces. Firstly, in \cite{Gordon3}, Gordon proved that if $K_1$, $K_2$ are compact spaces such that $d_{BM}(\C(K_1), \C(K_2))<3$, then all derivatives of $K_1$ and $K_2$ have the same cardinality. This result was generalized to the case of vector-valued functions in \cite[Theorem 1.5]{CandidoGalego3} and \cite[Theorem 1.7]{GalegoVillamizar}. 
	Next, it was proved in \cite[Theorem 1.2]{CandidoGalegoComega} that if $K$ is a compact space with $K^{(n)}$ nonempty for some $n \in \en$ and $F$ is a compact space with $F^{(2)}=\emptyset$, then $d_{BM}(\C(K), \C(F)) \geq 2n-1$.  Moreover, if $\abs{K^{(n)}}>\abs{F^{(1)}}$, then $d_{BM}(\C(K), \C(F)) \geq 2n+1$. In \cite[Theorem 1.1]{CANDIDOc0} it has been showed that if $\Gamma$ is an infinite discrete space, $E$ is a Banach space not containing an isomorphic copy of $c_0$ and $T: \C(K) \rightarrow \C_0(\Gamma, E)$ is an into isomorphism, then for each $n \in \en$, if $K^{(n)}$ is nonempty, then $\norm{T}\norm{T^{-1}} \geq 2n+1$. 
	Similar results for isomorphisms with range in $\C_0(\Gamma, E)$ spaces were proven before in \cite{CandidoGalegoFund} and \cite{CandidoGalegoCotype}. These estimates were extended in \cite{rondos-scattered-subspaces} to the case of two spaces $K_1$ and $K_2$ of finite height.
	
	The purpose of this paper is to show that for Banach spaces $E_1$ and $E_2$ not containing an isomorphic copy of $c_0$, the condition $\Gamma(ht(K_1))=\Gamma(ht(K_2))$ remains necessary for the spaces $\C(K_1, E_1)$ and $\C(K_2, E_2)$ to be isomorphic, and also, that the known distance estimates between spaces of continuous functions on compact spaces of finite height can be extended to compact spaces of arbitrary height. More precisely, we have the following result.
	
	\begin{thm}
		\label{theresult}
		Let $K_1, K_2$ be infinite compact spaces, $E$ be a Banach space containing no copy of $c_0$ and $T:\C(K_1) \rightarrow \C(K_2, E)$ be an isomorphic embedding. If $n, k \in \en$, $n > k$, and $\alpha$ is an ordinal such that \[\omega^{\alpha} k < ht(K_2) \leq \omega^{\alpha}(k+1) \text{ and } ht(K_1) > \omega^{\alpha} n,\]
		then 
		\[\norm{T}\norm{T^{-1}} \geq \max\{3, \frac{2n-k}{k}\}.\]
		In particular, if $\C(K_1)$ is isomorphic to a subspace of $\C(K_2, E)$, then $\Gamma(ht(K_1)) \leq \Gamma(ht(K_2))$.
	\end{thm}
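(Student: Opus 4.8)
The plan is to bound $\lambda := \norm{T}\norm{T^{-1}}$ from below by analysing the adjoint of $T$ against the derivative structures of $K_1$ and $K_2$, and then to read off the corollary by pure ordinal arithmetic. First I would normalise, replacing $T$ by a scalar multiple so that $\norm{f} \le \norm{Tf} \le \lambda\norm{f}$ for every $f\in\C(K_1)$. For $y\in K_2$ and $e^*\in E^*$ with $\norm{e^*}\le 1$, the functional $g\mapsto\la g(y),e^*\ra$ lies in $\C(K_2,E)^*$, and $T^*$ carries it to a Radon measure $\mu_y^{e^*}$ on $K_1$ with $\norm{\mu_y^{e^*}}\le\lambda$. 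The lower estimate yields the norming property $\sup_{y,\,\norm{e^*}\le1}\abs{\mu_y^{e^*}(f)}=\sup_y\norm{Tf(y)}\ge\norm{f}$. The hypothesis that $E$ contains no copy of $c_0$ enters precisely here: by Pe\l czy\'nski's property $(V)$, every bounded operator $\C(K_1)\to E$, in particular each evaluation $f\mapsto Tf(y)$, is weakly compact, so its representing vector measure is norm countably additive. This is what prevents the mass of the family $\{\mu_y^{e^*}\}$ from dispersing into a $c_0$-like pattern, and lets the classical scalar peaking arguments be run fibrewise in $E$ without cancellation.

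The heart of the matter is a quantitative \emph{order-descent} estimate at the scale $\omega^\alpha$. Fixing a point of $K_1^{(\omega^\alpha n)}$ and exploiting its derivative order, I would build around it a sufficiently disjoint system of norm-one bump functions; the norming property then forces their images to carry atoms at points of $K_2$ whose derivative orders decrease in controlled $\omega^\alpha$-steps. The content of the lemma is that, because $E$ omits $c_0$, the $E$-valued contributions of these atoms cannot cancel one another — a uniformly bounded system of genuinely independent unit vectors would reproduce $c_0$ — so climbing a single $\omega^\alpha$-level in the tower of $K_2$ can account for at most $\tfrac{\lambda+1}{2}$ levels of order in $K_1$. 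This localises, to one $\omega^\alpha$-block, the mechanism already responsible for the base estimates $2m-1$ of \cite{CandidoGalegoComega} and \cite{CANDIDOc0}.

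Iterating this descent over the $k$ successive levels $K_2^{(\omega^\alpha)},\dots,K_2^{(\omega^\alpha k)}$ of $K_2$ — and using the additivity of $\omega^\alpha j$ to pass correctly through limit stages of $\alpha$ — one accumulates the inequality $n\le k\cdot\tfrac{\lambda+1}{2}$, which rearranges to $\lambda\ge\tfrac{2n-k}{k}$. Separately, since $n\ge k+1$ we have $K_1^{(\omega^\alpha(k+1))}\ne\emptyset$, while $ht(K_2)\le\omega^\alpha(k+1)$ forces $K_2^{(\omega^\alpha(k+1))}=\emptyset$; this genuine difference of derivatives at level $\omega^\alpha(k+1)$ feeds the vector-valued Amir--Cambern/Gordon estimates of \cite{CandidoGalego3} and \cite{GalegoVillamizar} to give $\lambda\ge 3$. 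Combining the two yields $\lambda\ge\max\{3,\tfrac{2n-k}{k}\}$.

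For the final assertion, assume $\C(K_1)$ embeds isomorphically into $\C(K_2,E)$ with constant $\lambda<\infty$ but $\Gamma(ht(K_1))>\Gamma(ht(K_2))$; we may take $K_1,K_2$ scattered, since otherwise \cite{CandidoScattered} makes both non-scattered and the inequality trivial. Writing $\Gamma(ht(K_2))=\omega^{\gamma_2}$ and using that $ht(K_2)$ is a successor, $\gamma_2$ must itself be a successor $\delta_2+1$; set $\alpha:=\delta_2$, so that $\omega^\alpha k<ht(K_2)\le\omega^\alpha(k+1)$ for some $k\ge1$. Since $\Gamma(ht(K_1))\ge\omega^{\gamma_2+1}=\omega^{\alpha+2}$, minimality of $\Gamma$ forces $ht(K_1)>\omega^{\alpha+1}=\sup_n\omega^\alpha n$, hence $ht(K_1)>\omega^\alpha n$ for every $n$. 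Choosing $n>k$ with $\tfrac{2n-k}{k}>\lambda$ and invoking the theorem contradicts $\norm{T}\norm{T^{-1}}=\lambda$, so $\Gamma(ht(K_1))\le\Gamma(ht(K_2))$. I expect the real obstacle to be the order-descent lemma of the second paragraph: making the $c_0$-free non-cancellation quantitative and uniform across a transfinite system of atoms, and threading it through limit ordinals $\alpha$ via $\omega^\alpha$-arithmetic, is where the work lies, whereas the counting and the ordinal bookkeeping for the corollary are comparatively routine.
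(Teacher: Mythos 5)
Your ordinal bookkeeping is sound: the reduction to scattered $K_1,K_2$ via \cite{CandidoScattered}, the observation that $ht(K_2)$ being a successor forces $\Gamma(ht(K_2))=\omega^{\delta_2+1}$, the choice $\alpha=\delta_2$, the deduction that $ht(K_1)>\omega^{\alpha}n$ for every $n$, and the final contradiction with a large $\frac{2n-k}{k}$ are all correct, and this is essentially how the ``in particular'' statement follows from the displayed estimate. Citing the vector-valued Amir--Cambern/Gordon results for the lower bound $3$ is also exactly what the paper does.

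The genuine gap is your second paragraph. The ``order-descent lemma'' --- that climbing one $\omega^{\alpha}$-level in $K_2$ accounts for at most $\frac{\lambda+1}{2}$ levels in $K_1$, uniformly and additively over the $k$ levels of $K_2$ --- \emph{is} the theorem; you assert it, do not prove it, and indeed acknowledge at the end that this is where the work lies. Moreover, it is doubtful the estimate can be localized level-by-level in the form you state. The paper's argument (Proposition \ref{main}) is a simultaneous transfinite induction on three statements $A(\alpha)$, $B(\alpha)$, $C(\alpha)$, whose engine is a finite-induction Claim constructing a \emph{nested} family $1\ge g_1\ge\cdots\ge g_n\ge 0$ of bumps, with $g_j\equiv 1$ near a point $x_j\in L_1^{(\omega^{\alpha}(n-j))}$, subject to a stratified condition tying all levels together at once: for each $s$, no point of $L_2^{(\omega^{\alpha}\max\{k-s+1,0\})}$ lies in $s$ of the sets $\{\norm{Tg_p}\ge\ep\}$. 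Each new bump is produced by applying the inductive hypothesis $A(\alpha)$ (the lower-level case of the theorem itself) inside the neighbourhood where the previous bump equals $1$, demanding smallness of its image on the union of the bad sets of \emph{all} previous bumps at shifted derivative levels of $L_2$; this interleaving cannot be recovered from $k$ independent single-level estimates. The quantitative conclusion then comes not from iterating a per-level bound but from one algebraic combination of all $n$ functions (Lemma \ref{norm}(b)): taking $g=2\sum_{i=1}^{n-k}g_i+\sum_{i=n-k+1}^{n}g_i$, one finds $y$ with $\norm{Tg(y)}\ge(2n-k)/\norm{T^{-1}}$, subtracts twice the sum of the $n-k$ functions whose images are $\ep$-small at $y$, and scales by $1/k$; this is where $(2n-k)/k$ arises. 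Your first paragraph is not wrong --- weak compactness of the evaluations $f\mapsto Tf(y)$, combined with the Dunford--Pettis property of $\C(K_1)$, would give the base case, where the paper instead uses the weakly-unconditionally-Cauchy-series characterization of spaces containing $c_0$ (Lemma \ref{fin}) --- but that mechanism only serves the bottom level ($L_2$ finite) and by itself does not yield the transfinite induction the theorem requires.
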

	
	The following corollary is immediate.
	
	\begin{cor}
		Let $K_1, K_2$ be infinite compact spaces, $E_1, E_2$ be  Banach spaces containing no copy of $c_0$. If $n, k \in \en$, $n > k$, and $\alpha$ is an ordinal such that \[\omega^{\alpha} k < ht(K_2) \leq \omega^{\alpha}(k+1) \text{ and } ht(K_1) > \omega^{\alpha} n,\]
		then 
		\[d_{BM}(\C(K_1, E_1), \C(K_2, E_2)) \geq \max\{3, \frac{2n-k}{k}\}.\]
		In particular, if $\C(K_1, E_1)$ is isomorphic to $\C(K_2, E_2)$, then $\Gamma(ht(K_1))=\Gamma(ht(K_2))$.
	\end{cor}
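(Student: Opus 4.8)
The plan is to reduce the entire statement to Theorem~\ref{theresult}, whose hypothesis concerns an isomorphic embedding of the \emph{scalar} space $\C(K_1)$ into a vector-valued space $\C(K_2,E)$. The bridge is the elementary observation that $\C(K)$ embeds isometrically into $\C(K,E)$ for any Banach space $E$: fixing a norm-one vector $e\in E$, the map $J_E\colon\C(K)\to\C(K,E)$ defined by $(J_E f)(x)=f(x)\,e$ satisfies $\norm{J_E f}=\norm{f}$ for every $f$, and its inverse on the range is likewise isometric.

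For the distance estimate I would take an arbitrary isomorphism $S\colon\C(K_1,E_1)\to\C(K_2,E_2)$ and form the composition $T=S\circ J_{E_1}\colon\C(K_1)\to\C(K_2,E_2)$, where $J_{E_1}$ is the isometric embedding above. Then $T$ is an isomorphic embedding, and a short bookkeeping of norms gives $\norm{T}\le\norm{S}$ together with $\norm{T^{-1}}\le\norm{S^{-1}}$ on the range of $T$ (here one uses that $J_{E_1}$ is isometric onto its image, so composing with it does not degrade the Banach-Mazur bound); hence $\norm{T}\norm{T^{-1}}\le\norm{S}\norm{S^{-1}}$. Since $E_2$ contains no copy of $c_0$, Theorem~\ref{theresult} applied to $T$ yields $\norm{T}\norm{T^{-1}}\ge\max\{3,\frac{2n-k}{k}\}$, and therefore $\norm{S}\norm{S^{-1}}\ge\max\{3,\frac{2n-k}{k}\}$. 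Taking the infimum over all such $S$ produces the claimed lower bound for $d_{BM}(\C(K_1,E_1),\C(K_2,E_2))$.

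For the ``in particular'' assertion, suppose $\C(K_1,E_1)$ is isomorphic to $\C(K_2,E_2)$. Composing $J_{E_1}$ with this isomorphism exhibits $\C(K_1)$ as (isomorphic to) a subspace of $\C(K_2,E_2)$; since $E_2$ contains no copy of $c_0$, the second assertion of Theorem~\ref{theresult} gives $\Gamma(ht(K_1))\le\Gamma(ht(K_2))$. Running the same argument with the roles of $(K_1,E_1)$ and $(K_2,E_2)$ interchanged---now invoking that $E_1$ contains no copy of $c_0$ and composing $J_{E_2}$ with the inverse isomorphism---yields the reverse inequality $\Gamma(ht(K_2))\le\Gamma(ht(K_1))$, and the two combine to the desired equality $\Gamma(ht(K_1))=\Gamma(ht(K_2))$.

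I do not expect a genuine obstacle here, since all of the analytic and ordinal-theoretic work is already carried out in Theorem~\ref{theresult} and the corollary is a formal consequence; indeed the paper flags it as immediate. The one point deserving care is the norm estimate for the composed map $T=S\circ J_{E_1}$, and in particular the verification that passing through an isometric embedding multiplies the distortion by $1$ rather than worsening it. Once that is in place, both the quantitative distance inequality and the symmetric application giving $\Gamma(ht(K_1))=\Gamma(ht(K_2))$ follow directly from the scalar-into-vector-valued statement.
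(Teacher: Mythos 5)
Your proposal is correct and is exactly the reduction the paper has in mind: the paper states the corollary as an immediate consequence of Theorem~\ref{theresult}, and your bridge via the isometric embedding $f\mapsto f\cdot e$ of $\C(K_1)$ into $\C(K_1,E_1)$, composed with an arbitrary isomorphism $S$ (plus the symmetric application of the ``in particular'' clause of Theorem~\ref{theresult} in both directions), is precisely the intended argument. The norm bookkeeping $\norm{S\circ J_{E_1}}\norm{(S\circ J_{E_1})^{-1}}\le\norm{S}\norm{S^{-1}}$ and the passage to the infimum are handled correctly, so there is nothing to add.
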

	
	We note that we will only need to prove that the lower bound $\frac{2n-k}{k}$ is true, since the bound $3$ is known, see \cite[Theorem 1.8]{GalegoVillamizar} or \cite[Theorem 1.1]{rondos-scattered-subspaces}. Also notice that the assumption that $c_0$ does not embed in $E_1, E_2$ cannot be plainly removed, which follows for example from the fact that
	
	\[\C([0, \omega], \C([0, \omega^{\omega}])) \simeq \C([0, \omega] \times [0, \omega^{\omega}]) \simeq \C([0, \omega^{\omega}], \C([0, \omega)]),\]
	
	but $\Gamma(ht([0,\omega]))=\Gamma(2)=\omega \neq \omega^2 =\Gamma(\omega+1)=\Gamma(ht([0,\omega^{\omega}]))$.
	
	Further, notice that Theorem \ref{theresult} together with the classical isomorphic classification due to Bessaga, Pe\l czy\'{n}ski, and Milutin yields that for compact metric spaces $K_1, K_2$ and a Banach space $E$ not containing an isomorphic copy of $c_0$, $\C(K_1, E)$ is isomorphic to $\C(K_2, E)$ if and only if $\C(K_1)$ is isomorphic to $\C(K_2)$. This gives a strengthening of the results from \cite{GALEGO_vector_classification_factor} and \cite{GALEGO_Zahn_classification}.
	
	Finally, a typographical note: the symbol \qedsymbol\ denotes the end of a proof, while, in nested proofs, we use $\square$ for the end of the inner proof.
	
	\section{The proofs}
	
	We start with the following lemma, which is essentially known in a slightly different formulation (see e.g. \cite[Proposition 2.3]{GalegoVillamizar} or \cite[Lemma 2.1]{rondos-scattered-subspaces}). Even though the proof is not complicated, we decided to include it for the convenience of the reader.
	
	\begin{lemma}
		\label{fin}
		Let $K_1, K_2$ be compact spaces, $E$ be a Banach space not containing an isomorphic copy of $c_0$, $L_1 \subseteq K_1$ be an infinite set, $U \supseteq L_1$ be an open set, $L_2 \subseteq K_2$ be a finite set and let $T:\C(K_1) \rightarrow \C(K_2, E)$ be an isomorphic embedding. Then for each $\ep>0$ there exist a function $f \in \C(K_1, [0, 1])$ and $x \in L_1$ such that $f=1$ on an open neighbourhood of $x$, $f=0$ on $K_1 \setminus U$, and such that $\norm{Tf(y)}<\ep$ for each $y \in L_2$.  
	\end{lemma}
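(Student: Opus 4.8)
The whole point of the lemma is to convert the hypothesis that $E$ contains no copy of $c_0$ into decay of $Tf$ along a suitable sequence of bump functions, and the tool for this is the Bessaga--Pe\l czy\'nski theorem: a Banach space $X$ contains no isomorphic copy of $c_0$ if and only if every weakly unconditionally Cauchy (WUC) series in $X$ converges unconditionally, in which case its terms tend to $0$ in norm. Recall that $\sum_n z_n$ is WUC exactly when the signed partial sums $\norm{\sum_{n\in F}\theta_n z_n}$ ($F\subseteq\en$ finite, $\theta_n=\pm1$) are uniformly bounded, a property preserved by bounded linear maps.

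First I would manufacture an infinite sequence of disjointly supported bumps inside $U$. Since $L_1$ is infinite and $K_1$ is compact, $L_1$ has an accumulation point $p$; using that $K_1$ is Hausdorff (to separate points from $p$) and regular (to shrink neighbourhoods so their closures stay inside $U\supseteq L_1$), I recursively choose distinct points $x_n\in L_1$ and pairwise disjoint open sets $V_n$ with $x_n\in V_n$ and $\ov{V_n}\subseteq U$. Urysohn's lemma then yields $f_n\in\C(K_1,[0,1])$ equal to $1$ on an open neighbourhood of $x_n$ and vanishing off $V_n$. By construction every $f_n$ already meets the two support requirements of the statement, so the task reduces to selecting one index $n$ with $\norm{Tf_n(y)}<\ep$ for all $y\in L_2$ at once.

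The disjointness of the supports gives, at each point of $K_1$, at most one nonzero summand, so $\norm{\sum_{n\in F}\theta_n f_n}\le1$ for all finite $F$ and signs $\theta_n$; hence $\sum_n f_n$ is WUC in $\C(K_1)$. Applying the bounded operator $T$ keeps it WUC in $\C(K_2,E)$, and composing with the norm-one evaluation $g\mapsto g(y)$ shows that $\sum_n Tf_n(y)$ is WUC in $E$ for each fixed $y\in L_2$. Here the hypothesis on $E$ is used: since $c_0$ does not embed in $E$, this series converges unconditionally, so $Tf_n(y)\to0$ as $n\to\infty$.

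Finally, because $L_2$ is finite, $\max_{y\in L_2}\norm{Tf_n(y)}\to0$, so I fix $n$ large enough that this maximum is below $\ep$ and put $f=f_n$, $x=x_n$. The only substantive step is the passage from the $c_0$-free hypothesis to the norm decay of the terms via Bessaga--Pe\l czy\'nski; the recursive topological construction of the disjoint bumps and the finiteness reduction over $L_2$ are routine.
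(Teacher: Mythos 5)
Your proof is correct and takes essentially the same route as the paper's: pairwise disjointly supported bump functions inside $U$, the Bessaga--Pe\l czy\'nski characterization of spaces without $c_0$ via weakly unconditionally Cauchy series, and the finiteness of $L_2$ to pick a single good index. The only organizational difference is that you argue directly (WUC series converge, so $Tf_n(y)\to 0$, then maximize over the finite set $L_2$), while the paper argues by contradiction, fixing one $y_0\in L_2$ along a subsequence and verifying the WUC condition through an explicit dual estimate with $T^*$; both verifications rest on the same disjoint-support bound for signed sums.
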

	
	\begin{proof}
		Since $L_1$ is infinite, we can find pairwise distinct points $\{x_n\}_{n=1}^{\infty}$ in $L_1$ together with pairwise disjoint open sets $\{U_n\}_{n=1}^{\infty}$ contained in $U$, each $U_n$ containing $x_n$. Next we find functions $\{f_n\}_{n=1}^{\infty} \subseteq \C(K_1, [0, 1])$ such that for each $n$, $f_n=1$ on some open neighbourhood of $x_n$ and $f_n(K_1 \setminus U_n)=\{0\}$.
		
		Now, we claim that one of the functions $f_n$ has the desired properties. Assuming the contrary, there exists $\ep>0$ such that for each $n \in \en$, there exists $y\in L_2$ such that $\norm{Tf_n(y)}\geq \ep$. Then, since $L_2$ is finite, passing to a subsequence we may fix a point $y_0 \in L_2$ satisfying that $\norm{Tf_n(y_0)}\geq \ep$ for each $n \in \en$. 
		
		Now, by the classical characterization of the Banach spaces containing $c_0$, see \cite[Theorem 6.7]{morrison2001functional}, to finish the proof it is enough to show that the series $\sum_{n=1}^{\infty} Tf_n(y_0)$ is weakly unconditionally Cauchy in $E$, meaning that $\sum_{n=1}^{\infty} \abs{\la e^*, Tf_n(y_0) \ra}< \infty$ for each $e^* \in \E$.
		To show this, we consider the evaluation mapping $\phi\colon K_2 \times \E \to \C(K_2, E)^*$ defined as
		\[ \la \phi(y, \e), g \ra=\la \e, g(y) \ra, \quad g \in \C(K_2, E), y \in K_2, \e \in \E.\] 
		Clearly, $\norm{\phi(y, \e)}=\norm{\e}$. Now, we fix $e^* \in E^*$, and let $T^*$ be the adjoint of $T$. Further, for a fixed $n \in \en$, let $\alpha_1, \ldots, \alpha_n \in S_{\er}$ satisfy 
		\[\abs{\la T^*\phi(y_0, e^*), f_i \ra}=\alpha_i \la T^*\phi(y_0, e^*), f_i \ra, \quad i=1, \ldots, n.\]
		Then we have 
		\begin{equation}
		\nonumber
		\begin{aligned}
		&\sum_{i=1}^{n} \abs{\la e^*, Tf_i(y_0) \ra}=\sum_{i=1}^{n} \abs{\la \phi(y_0, e^*), Tf_i \ra}=
		\sum_{i=1}^{n} \abs{\la T^*\phi(y_0, e^*), f_i \ra}
		=\\&=
		\sum_{i=1}^{n} \alpha_i \la T^*\phi(y_0, e^*), f_i \ra=\la T^*\phi(y_0, e^*), \sum_{i=1}^{n} \alpha_i f_i \ra 
		\leq \\& \leq
		\norm{T^*\phi(y_0, e^*)} \norm{\sum_{i=1}^n \alpha_i f_i}\leq \norm{T^*} \norm{e^*}{\norm{\sum_{i=1}^n \alpha_i f_i}} = \norm{T^*}\norm{e^*}. 
		\end{aligned}
		\end{equation}
		Thus also $\sum_{n=1}^{\infty} \abs{\la e^*, Tf_n(y_0) \ra} \leq \norm{T^*}\norm{e^*}<\infty$, which finishes the proof.
	\end{proof}
	
	Further, the following important lemma is based on an idea of Gordon \cite[Lemma 2.2]{Gordon3}, which has been improved and generalized subsequently by several authors (\cite{CandidoGalego3}, \cite{CandidoGalegoComega}, \cite[Lemma 2.1]{CANDIDOc0} and \cite[Lemma 4.3]{rondos-scattered-subspaces}). 
	
	\begin{lemma}
		\label{norm}
		Let $K_1, K_2$ be compact spaces, $E$ be a Banach space, let $T:\C(K_1) \rightarrow \C(K_2, E)$ be an isomorphic embedding and let $L_2$ be a subset of $ K_2$. Let $n, k \in \en$, $n>k$, and let $\ep>0$ be given. Suppose that there exist functions $g_1, \ldots, g_n \in \C(K_1, [0, 1])$ and $x \in K_1$ such  that $g_1(x)=\ldots =g_n(x)=1$,
		and such that for each $y \in L_2$, the set
		\[\{ i \in \{1, \ldots, n\}: \norm{Tg_i(y)} \geq \ep \}\]
		has cardinality at most $k$. Then:
		\begin{itemize}
			\item[(a)] there exists a linear combination $h$ of the functions $g_1, \ldots, g_n$ such that $\norm{h}=1$ and $\norm{Th|_{L_2}} \leq  \frac{k\norm{T}+(n-k)\ep}{n}$.
			
			\item[(b)]If moreover $L_2=K_2$ and 
			$g_1 \leq \ldots \leq g_n$,
			then there exists a linear combination $f$ of the functions $g_1, \ldots, g_n$ such that $\norm{f}=1$ and
			\[\norm{Tf} \geq \frac{2n-k}{k\norm{T^{-1}}}-\frac{2(n-k)}{k}\ep.\]
		\end{itemize}
	\end{lemma}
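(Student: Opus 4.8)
The plan for part (a) is simply to average. Set $h=\frac1n\sum_{i=1}^n g_i$; since every $g_i(x)=1$ and $0\le g_i\le 1$ we get $h(x)=1$ and $0\le h\le 1$, so $\norm h=1$. Fixing $y\in L_2$ and splitting $\{1,\dots,n\}$ into the set $S_y=\{i:\norm{Tg_i(y)}\ge\ep\}$, of cardinality $\le k$, and its complement, we bound $\norm{Th(y)}\le\frac1n\sum_{i=1}^n\norm{Tg_i(y)}\le\frac1n\bigl(|S_y|\,\norm T+(n-|S_y|)\ep\bigr)$; since we may assume $\norm T\ge\ep$ (otherwise every term is already $<\ep$ and the estimate is immediate) the right-hand side is largest when $|S_y|=k$, giving $\norm{Th(y)}\le\frac{k\norm T+(n-k)\ep}{n}$. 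Taking the supremum over $y\in L_2$ finishes (a).

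For part (b) the idea is to upgrade the averaging bound of (a) by a reflection that doubles the relevant value, exploiting the ordering. First I would locate a point carrying a large value: applying the embedding lower estimate $\norm{Tg}\ge\norm g/\norm{T^{-1}}$ to a suitable average of the $g_i$ and using that the norm is attained on the compact $K_2$, I obtain $y_0\in K_2$ and a norming functional $e^*\in\E$, $\norm{e^*}\le1$, with $\la e^*,Th(y_0)\ra\ge 1/\norm{T^{-1}}$. Passing to $\nu=T^*\phi(y_0,e^*)$, a functional on $\C(K_1)$ with $\norm\nu\le\norm T$ and $\la\nu,g_i\ra=\la e^*,Tg_i(y_0)\ra$, the hypothesis at $y_0$ says that all but at most $k$ of the numbers $\la\nu,g_i\ra$ are $<\ep$ in modulus, while the averaging forces the large ones to be sizeable: a pigeonhole over the $\le k$ large indices produces one of them, say $\la\nu,g_{i_0}\ra$, of size at least about $\frac nk\cdot\frac1{\norm{T^{-1}}}$.

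The doubling then comes from the monotonicity $g_1\le\dots\le g_n$. Having fixed such an $i_0$ together with a ``good'' index $m>i_0$ (one with $\abs{\la\nu,g_m\ra}<\ep$), I would take $f=2g_{i_0}-g_m$. Because $0\le g_{i_0}\le g_m\le1$, the pointwise bound $2g_{i_0}(t)-g_m(t)\in[-1,1]$ holds, so $\norm f=1$, while $\la\nu,f\ra=2\la\nu,g_{i_0}\ra-\la\nu,g_m\ra$ is roughly twice the large value. Since $\norm{Tf}\ge\abs{\la\nu,f\ra}$, this already yields a lower bound of order $\frac{2n}{k}\cdot\frac1{\norm{T^{-1}}}$, up to a multiple of $\ep$.

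The hard part is exactly the selection making this reflection legitimate. The equality $\norm f=1$ needs the subtracted index $m$ to lie \emph{above} $i_0$ in the ordering, so one must simultaneously guarantee that the located index $i_0$ is not too high and that a good index survives above it; this is where the at-most-$k$ condition is used quantitatively, and balancing ``how high $i_0$ can be'' against ``how much room remains above it'' is what degrades the naive factor $\frac{2n}{k}$ to the sharp $\frac{2n-k}{k}$, the $-k$ measuring the indices that may be absorbed by the large set. Arranging this uniformly---so that the doubling is available no matter how the large values are distributed among the $n$ levels, e.g.\ by weighting the auxiliary average toward the lower indices and invoking the hypothesis at the relevant points---is the main technical obstacle, whereas the error terms contribute exactly the $\frac{2(n-k)}{k}\ep$ appearing in the statement.
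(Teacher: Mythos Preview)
Your part (a) is correct and is exactly the paper's argument.

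For part (b), however, your proposal is only a sketch with an acknowledged gap. The single-pair reflection $f=2g_{i_0}-g_m$ requires a small index $m>i_0$, and as you yourself note, this may fail when the at-most-$k$ large indices sit at the top of $\{1,\dots,n\}$; you leave this ``main technical obstacle'' unresolved and merely gesture at ``weighting the auxiliary average toward the lower indices''. That hint is in fact the whole point, and the paper carries it out directly, bypassing any selection argument. Rather than isolating one large index via pigeonhole, the paper forms the weighted sum
\[
g=2\sum_{i=1}^{n-k}g_i+\sum_{i=n-k+1}^n g_i,
\]
which satisfies $g(x)=2(n-k)+k=2n-k$, hence $\norm{Tg}\ge(2n-k)/\norm{T^{-1}}$, attained at some $y\in K_2$. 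At this $y$ one lists the $n-k$ small indices $i_1,\dots,i_{n-k}$ (those with $\norm{Tg_{i_j}(y)}<\ep$) and sets
\[
f=\frac1k\Bigl(g-2\sum_{j=1}^{n-k}g_{i_j}\Bigr).
\]
Rewriting $kf=2\sum_{i\in B}g_i-\sum_{i=n-k+1}^n g_i$, where $B$ is the $k$-element complement of $\{i_1,\dots,i_{n-k}\}$, the ordering $g_1\le\dots\le g_n$ gives $-k\le kf\le k$ directly, so $\norm f=1$ (since $f(x)=1$), and then
\[
\norm{Tf(y)}\ge\frac1k\Bigl(\norm{Tg(y)}-2\sum_{j=1}^{n-k}\norm{Tg_{i_j}(y)}\Bigr)\ge\frac{2n-k}{k\norm{T^{-1}}}-\frac{2(n-k)}{k}\ep.
\]
By subtracting \emph{all} $n-k$ small functions at once rather than a single one above a chosen large index, no existence-of-$m$ issue arises, and the degradation from $2n/k$ to $(2n-k)/k$ is already encoded in the value $g(x)=2n-k$.
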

	
	\begin{proof}
		The function $h$ is defined simply as $h=\frac{1}{n}\sum_{i=1}^n g_i$. Then $1 \geq \norm{h} \geq h(x)=\frac{1}{n}n=1$.
		If $y \in L_2$ is arbitrary, then there exist $n-k$ functions $g_{i_1}, \ldots , g_{i_{n-k}}$ satisfying that for each $j=1, \ldots ,n-k$, $\norm{Tg_{i_j}(y)}<\ep$. Thus 
		\[\norm{Th(y)} \leq \frac{1}{n}\sum_{i=1}^n \norm{Tg_i(y)}\leq \frac{k\norm{T}+(n-k)\ep}{n}.\]
		
		Now, suppose that $L_2=K_2$. To find the function $f$ we first consider the function $g=2\sum_{i=1}^{n-k} g_i+\sum_{i=n-k+1}^n g_i \in \C(K_1)$. We have 
		\[\norm{g} \geq g(x)=2(n-k)+k=2n-k.\]
		Thus \[\norm{Tg} \geq \frac{1}{\norm{T^{-1}}}\norm{g}\geq \frac{2n-k}{\norm{T^{-1}}},\]
		and hence there exists $y \in K_2$ such that $\norm{Tg(y)}\geq \frac{2n-k}{\norm{T^{-1}}}$.
		Next, there exist indices $i_1, \ldots, i_{n-k} \in \{1, \ldots n\}$ such that for each $j=1, \ldots, n-k$, $\norm{Tg_{i_j}(y)}<\ep$.
		We denote \[f=\frac{1}{k}(2\sum_{i=1}^{n-k} g_i+\sum_{i=n-k+1}^n g_i-2\sum_{j=1}^{n-k} g_{i_j})\]
		and we check that this function has the desired properties.
		Firstly, since $0 \leq g_1 \leq \ldots \leq g_n \leq 1$, we have
		
		\begin{equation}
		\nonumber
		\begin{aligned}
		-k&\leq -\sum_{i=n-k+1}^{n} g_{i}
		\leq 
		2\sum_{i=1}^{n} g_i-2\sum_{j=1}^{n-k}g_{i_j}-\sum_{i=n-k+1}^{n} g_{i}=\\&
		=2\sum_{i=1}^{n-k} g_i+\sum_{i=n-k+1}^{n} g_{i}-2\sum_{j=1}^{n-k}g_{i_j}=k f\leq
		\sum_{i=n-k+1}^{n} g_{i} \leq k.
		\end{aligned}
		\end{equation}
		Thus $\norm{f} \leq 1$. Moreover, 
		\[f(x)=\frac{1}{k}(2(n-k)+k-2(n-k))=1,\]
		and hence $\norm{f}=1$. On the other hand,
		\begin{equation}
		\nonumber
		\begin{aligned}
		\norm{Tf} &\geq \norm{Tf(y)} \geq \frac{1}{k}(\norm{Tg(y)}-2\sum_{j=1}^{n-k} \norm{Tg_{i_j}(y)}) \geq \frac{2n-k}{k\norm{T^{-1}}}-\frac{2(n-k)}{k} \ep,
		\end{aligned}
		\end{equation}
		which finishes the proof.
	\end{proof}

	Next, we are going to state some elementary results on scattered  derivatives.

	\begin{lemma}\label{l: maxscatt and open reduction}
		Let $K$ be a compact space. 
		\begin{itemize}
			\item[(a)] If $L_1,L_2$ are two closed scattered subspaces of $K$, then $ht(L_1\cup L_2)\leq \max\{ht(L_1),ht(L_2)\}$.
			\item[(b)] If $L$ is a subset of $K$ and $V$ is an open subset of $K$, then for each ordinal $\alpha$, $V \cap L^{(\alpha)} \subseteq (V \cap L)^{(\alpha)}$.
			\item[(c)] If $L$ is a subset of $K$ and $\alpha$ and $\beta$ are ordinal numbers, then it holds $(L^{(\alpha)})^{(\beta)}=L^{(\alpha+\beta)}$.
		\end{itemize}
	\end{lemma}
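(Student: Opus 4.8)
The plan is to reduce all three statements to transfinite inductions, after recording the elementary properties of the Cantor--Bendixson derivative that drive them. First I would note that $A\mapsto A^{(1)}$ is monotone ($A\subseteq B$ implies $A^{(1)}\subseteq B^{(1)}$), that $A^{(1)}$, being the set of non-isolated points of $A$, is contained in $A$, so that the transfinite sequence $(A^{(\beta)})_\beta$ is decreasing (i.e.\ $A^{(\beta')}\subseteq A^{(\beta)}$ whenever $\beta\le\beta'$), and that the derivative of a closed set is closed. All of these follow directly from the definition of $A^{(1)}$ as the set of accumulation points of $A$.

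For (a) the key observation is that $(A\cup B)^{(1)}=A^{(1)}\cup B^{(1)}$ whenever $A,B$ are closed: the inclusion $\supseteq$ is monotonicity, while for $\subseteq$ a non-isolated point $x$ of $A\cup B$ that happens to be isolated in $A$ must be an accumulation point of $B$, hence lies in $B$ by closedness, and is therefore non-isolated in $B$. I would then prove $(L_1\cup L_2)^{(\alpha)}\subseteq L_1^{(\alpha)}\cup L_2^{(\alpha)}$ by transfinite induction on $\alpha$; the successor step uses the above identity applied to the closed sets $L_i^{(\beta)}$, and the limit step uses that the intersection of the nested unions $L_1^{(\beta)}\cup L_2^{(\beta)}$ distributes over the two separate intersections, precisely because both sequences are decreasing. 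Setting $\alpha=\mu:=\max\{ht(L_1),ht(L_2)\}$ makes both terms on the right empty, so $(L_1\cup L_2)^{(\mu)}=\emptyset$ and thus $ht(L_1\cup L_2)\le\mu$.

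For (b), openness of $V$ enters only in the base case: if $x\in V\cap L^{(1)}$ then $x$ is a non-isolated point of $L$ sitting in the open set $V$, and intersecting an arbitrary neighbourhood of $x$ with $V$ shows $x$ is already non-isolated in $V\cap L$, i.e.\ $x\in(V\cap L)^{(1)}$. The successor step applies this inclusion with $L^{(\beta)}$ in place of $L$ and then invokes the induction hypothesis together with monotonicity of the derivative, while the limit step follows by intersecting the inclusions over $\beta<\alpha$ and commuting the intersection with $V\cap(\cdot)$.

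For (c) I would fix $\alpha$ and induct on $\beta$. The successor step is immediate from $\alpha+(\gamma+1)=(\alpha+\gamma)+1$. The limit step is the delicate point: the induction hypothesis gives $(L^{(\alpha)})^{(\beta)}=\bigcap_{\gamma<\beta}L^{(\alpha+\gamma)}$, which must be identified with $L^{(\alpha+\beta)}=\bigcap_{\delta<\alpha+\beta}L^{(\delta)}$. Here I would use that $\{\alpha+\gamma:\gamma<\beta\}$ is cofinal in $\alpha+\beta$ and that the derivatives are decreasing in their index, so that the $\gamma=0$ term $L^{(\alpha)}$ already absorbs all indices $\delta\le\alpha$ while the remaining terms match the tail; this is exactly where the convention $A^{(1)}\subseteq A$ is indispensable. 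The main obstacle throughout is the bookkeeping at limit ordinals—getting the cofinality matching in (c) and the distribution over nested unions in (a) correct—rather than any single hard idea.
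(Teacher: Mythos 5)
Your proposal is correct and takes essentially the same route as the paper: all three parts are proved by transfinite induction, with (a) reduced to the derivative-of-a-union fact for closed sets and the nestedness of derivatives at limit stages, (b) using openness of $V$ only in the base case, and (c) handled at limits via cofinality of $\{\alpha+\gamma:\gamma<\beta\}$ in $\alpha+\beta$ together with the decreasing property of derivatives. The only cosmetic differences are that you assert the equality $(A\cup B)^{(1)}=A^{(1)}\cup B^{(1)}$ where only the inclusion $\subseteq$ is needed, and your case analysis in (a) should also treat $x\notin A$ (not just $x$ isolated in $A$), which closedness of $A$ handles by the same neighbourhood argument.
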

	
	\begin{proof}
		In order to get the assertion (a), it is enough to prove that for each ordinal $\alpha$, $(L_1 \cup L_2)^{(\alpha)} \subseteq L_1^{(\alpha)} \cup L_2^{(\alpha)}$. Let us prove it by using a transfinite induction argument. The case $\alpha=0$ is trivial. Next we consider the case $\alpha=1$. Thus we suppose that $x$ is an accumulation point of $L_1 \cup L_2$. We find a net $(x_{\lambda})_{\lambda \in \Lambda} \subseteq L_1 \cup L_2$ converging to $x$. Passing to a subnet, we may suppose that $(x_{\lambda})_{\lambda \in \Lambda} \cup \{x\} \subseteq L_1$. Thus $x \in L_1^{(1)}$, which finishes the proof for $\alpha=1$. 
		Now, we suppose that the statement holds for an ordinal $\alpha \geq 1$. Then we have
		\[(L_1 \cup L_2)^{(\alpha+1)}=((L_1 \cup L_2)^{(\alpha)})^{(1)} \subseteq (L_1^{(\alpha)} \cup L_2^{(\alpha)})^{(1)} \subseteq L_1^{(\alpha+1)} \cup L_2^{(\alpha+1)}.\]
		Further, for a limit ordinal $\alpha$ we have
		\[(L_1 \cup L_2)^{(\alpha)}=\bigcap_{\beta < \alpha} (L_1 \cup L_2)^{(\beta)} \subseteq \bigcap_{\beta < \alpha} (L_1^{(\beta)} \cup L_2^{(\beta)})=L_1^{(\alpha)} \cup L_2^{(\alpha)},\]
		which finishes the proof of (a).
		
		For the proof of (b) we proceed again by transfinite induction. The case $\alpha=0$ holds trivially. Next we prove the statement for $\alpha=1$. Thus we pick $x \in V \cap L^{(1)}$. Hence $x \in V \cap L$ and there exists a net $(x_{\lambda})_{\lambda \in \Lambda} \subseteq L$ converging to $x$. Then  $(x_{\lambda})_{\lambda \in \Lambda} \subseteq V$ eventually, and hence $x$ is an accumulation point of $V \cap L$. Now, we suppose that the statement holds for an ordinal $\alpha \geq 1$, and we get
		\[V \cap L^{(\alpha+1)} = V \cap (L^{(\alpha)})^{(1)} \subseteq (V \cap L^{(\alpha)})^{(1)} \subseteq ((V \cap L)^{(\alpha)})^{(1)}=(V \cap L)^{(\alpha+1)}.
		\]
		Finally, for a limit ordinal $\alpha$ we have 
		\[V \cap L^{(\alpha)}=V \cap \bigcap_{\beta<\alpha} L^{(\beta)}=\bigcap_{\beta<\alpha} V \cap L^{(\beta)} \subseteq \bigcap_{\beta<\alpha} (V \cap L)^{(\beta)}=(V \cap L)^{(\alpha)}.\]
		Finally, we are going to prove (c). Let us prove it by induction on $\beta$. For $\beta=0$ the formula holds trivially. Suppose that for each $\gamma<\beta$ it holds $(L^{(\alpha)})^{(\gamma)}=L^{\alpha+\gamma}$, hence if $\beta=\delta+1$ we have, by definition of derivative, $(L^{(\alpha)})^{(\beta)}=((L^{(\alpha)})^{(\delta)})^{(1)}=(L^{(\alpha+\delta)})^{(1)}=L^{(\alpha+\delta+1)}=L^{(\alpha+\beta)}$. While if $\beta$ is a limit ordinal, we get $(L^{(\alpha)})^{(\beta)}=\bigcap_{\gamma<\beta} (L^{(\alpha)})^{(\gamma)}=\bigcap_{\gamma<\beta} L^{(\alpha+\gamma)}=\bigcap_{\gamma<\alpha+\beta} L^{(\gamma)}=L^{(\alpha+\beta)}$.
		The proof is finished.
	\end{proof}

	The following proposition, which essentially contains the proof of Theorem \ref{theresult}, is inspired by the approach of Bessaga and Pe\l czy\'{n}ski \cite{BessagaPelcynski_classification} and by \cite[Theorem 1.3]{rondos-scattered-subspaces}. Given a set $\Gamma$ and a positive integer $n$, we denote $[\Gamma]^n:=\{A\subset \Gamma: |A|=n\}$.
	\begin{prop}
		\label{main}
		Let $K_1, K_2$ be nonempty compact spaces and $E$ be a Banach space not containing an isomorphic copy of $c_0$. Suppose that $T:\C(K_1) \rightarrow \C(K_2, E)$ is an isomorphic embedding, let $L_1 \subseteq K_1$, $U$ be an open set containing $L_1$, and let $L_2 \subseteq K_2$ be a compact set. Then the following assertions hold.
		\begin{itemize}
			\item[(a)] If $\Gamma(ht(L_1))>\Gamma(ht(L_2))$, then for each $\ep>0$ there exist a function $f \in \C(K_1, [0, 1])$ and $x \in L_1$ such that $f=1$ on an open neighbourhood of $x$, $f=0$ on $K_1 \setminus U$ and $\norm{Tf|_{L_2}}<\ep$.
			\item[(b)] If $n, k \in \en$, $n > k$, and $\alpha$ is an ordinal such that 
			\[\omega^{\alpha} k < ht(L_2) \leq \omega^{\alpha}(k+1) \text{ and } ht(L_1) > \omega^{\alpha} n,\]
			then for each $\ep>0$ there exist a function $h \in \C(K_1, [0, 1])$ and a point $x \in L_1$ such that $h=1$ on an open neighbourhood of $x$, $h=0$ on $K_1 \setminus U$ and 
			\[\norm{Th|_{L_2}} \leq \frac{k\norm{T}+(n-k)\ep}{n}.\]
			\item[(c)] If $n, k \in \en$, $n > k$, and $\alpha$ is an ordinal such that 
			\[\omega^{\alpha} k < ht(K_2) \leq \omega^{\alpha}(k+1) \text{ and } ht(K_1) > \omega^{\alpha} n,\]
			then for each $\ep>0$ there exist a function $f \in \C(K_1, [0, 1])$ of norm $1$ such that 
			\[\norm{Tf} \geq \frac{2n-k}{k\norm{T^{-1}}}-\frac{2(n-k)}{k}\ep.\] 
		\end{itemize}
	\end{prop}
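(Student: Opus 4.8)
\emph{Part (a).} The plan is transfinite induction on $ht(L_2)$, which, being the height of a compact scattered space, is a successor ordinal $\gamma+1$ (the hypothesis $\Gamma(ht(L_1))>\Gamma(ht(L_2))$ forces $L_2$ scattered). The base case $ht(L_2)\le 1$ is exactly Lemma~\ref{fin}, since then $L_2$ is finite and the gap forces $L_1$ to be infinite. For the step I would isolate the finite top layer $F=L_2^{(\gamma)}$ (finite because its first derivative is empty) and treat it and the remainder separately. First apply Lemma~\ref{fin} to $F$, but with the infinite set taken to be $L_1^{(\gamma)}$ — which is infinite since, by Lemma~\ref{l: maxscatt and open reduction}(c) and the gap, $ht(L_1^{(\gamma)})>\Gamma(ht(L_2))$ — obtaining a bump $f_0=1$ on an open set $W\ni x_0$ with $x_0\in L_1^{(\gamma)}$, $f_0=0$ off $U$, and $\norm{Tf_0}<\ep$ on $F$ and hence on an open $V\supseteq F$. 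Then $L_2\setminus V$ is compact of height $\le\gamma<ht(L_2)$, while by Lemma~\ref{l: maxscatt and open reduction}(b) the trace $W\cap L_1$ still has height $>\gamma$, so a suitable further derivative keeps the gap against $L_2\setminus V$ and the inductive hypothesis applies inside $W$.

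\emph{The family behind (b) and (c).} With (a) in hand I would construct, by induction on $k$, functions $g_1\le\dots\le g_n$ in $\C(K_1,[0,1])$ and a common point $x\in L_1$ with $g_i=1$ near $x$ and $g_i=0$ off $U$, such that for every $y\in L_2$ at most $k$ of the numbers $\norm{Tg_i(y)}$ reach $\ep$. The base $k=0$ ($ht(L_2)\le\omega^{\alpha}$) is a single bump from (a), small on all of $L_2$ and repeated as $g_1=\dots=g_n$. For the step I would peel the top layer $D=L_2^{(\omega^{\alpha}k)}$, which has height $\le\omega^{\alpha}$ and is therefore reachable by (a) applied to the tall piece $L_1^{(\omega^{\alpha}(n-1))}$ (of height $>\omega^{\alpha}$ by Lemma~\ref{l: maxscatt and open reduction}(c)); this gives a bump $T$-small on a neighbourhood $V$ of $D$ and centred in that derivative, so by Lemma~\ref{l: maxscatt and open reduction}(b) the region where it equals $1$ meets $L_1$ in a set of height $>\omega^{\alpha}(n-1)$. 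Into that region I feed the inductive hypothesis for $k-1$ on the lower compactum $L_2\setminus V$ with $n-1>k-1$ functions. The ordering is automatic because the recursive functions are supported where the new bump is identically $1$; the content is to interleave the new bump with the recursive family so that the count stays $\le k$ simultaneously on $V$ and on $L_2\setminus V$.

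\emph{Deducing (b) and (c).} The family above satisfies the hypotheses of Lemma~\ref{norm} for the given $\ep$. For (b) I apply Lemma~\ref{norm}(a) to obtain $h$ with $\norm{h}=1$ and $\norm{Th|_{L_2}}\le\frac{k\norm{T}+(n-k)\ep}{n}$; averaging preserves $h=1$ near $x$ and $h=0$ off $U$. For (c), where $L_2=K_2$ and the $g_i$ are ordered, Lemma~\ref{norm}(b) yields $f$ with $\norm{f}=1$ and $\norm{Tf}\ge\frac{2n-k}{k\norm{T^{-1}}}-\frac{2(n-k)}{k}\ep$, which is the asserted estimate.

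\emph{Main obstacle.} I expect the crux to be the counting bound ``at most $k$ indices large at each $y$'' surviving the induction. The delicate point is that points of the bottom layer $L_2\setminus L_2^{(\omega^{\alpha})}$ cannot be made $T$-small by (a) — there the height gap is too tight — so the control there must instead come from the disjoint-support mechanism underlying Lemma~\ref{fin}; reconciling this with the (a)-based smallness on the higher layers, while keeping the ordering $g_1\le\dots\le g_n$, the common value $1$ at $x$, and confinement of supports to $U$, is the real bookkeeping. The assumption $n>k$ is exactly what ensures that after peeling $k$ layers, each consuming one $\omega^{\alpha}$-block of $ht(K_1)$, enough height of $K_1$ remains to launch the recursion.
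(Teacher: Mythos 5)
Your overall plan---prove (a) first by its own induction, then build the family for Lemma~\ref{norm}---cannot be carried out in that order, and both halves founder on the same genuine gap: at the end of each inductive step you never exhibit a \emph{single} function with the required control on all of $L_2$. In your step for (a) you produce $f_0$ with $\norm{Tf_0}<\ep$ on $V\supseteq F$ and, recursively, a function $f_1$ supported inside $W$ with $\norm{Tf_1}<\ep$ on $L_2\setminus V$; but $T$ is merely an isomorphic embedding, so it is neither multiplicative nor local: $f_1\le f_0$ and $f_1=0$ off $W$ give no information whatsoever about $Tf_1$ on $V$, and no combination of $f_0$ and $f_1$ is small on all of $L_2$, since each is uncontrolled on the other's piece. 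The same failure occurs in your step for (b): the $n-1$ recursive functions are controlled only on $L_2\setminus V$, so at a point of $V$ all $n-1$ of them may be $\ep$-large, destroying the count; you flag this yourself as ``the real bookkeeping,'' but that bookkeeping is exactly the missing proof, and it cannot be supplied by the disjoint-support mechanism of Lemma~\ref{fin} alone.

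The paper's proof contains the two ideas your proposal lacks, and they force a different logical architecture. Writing $A(\alpha)$, $B(\alpha)$, $C(\alpha)$ for statements (a), (b), (c) restricted to targets with $\Gamma(ht(L_2))\le\omega^{\alpha}$, the paper proves all three by one simultaneous transfinite induction: $A(\alpha)\wedge B(\alpha)\Rightarrow B(\alpha+1)$ and $B(\alpha+1)\Rightarrow A(\alpha+1)$, with trivial limit stages. First, (a) is never proved by gluing: $A(\alpha+1)$ is \emph{deduced} from $B(\alpha+1)$ by taking $n$ huge, since averaging $n$ functions of which at most $k$ are $\ep/2$-large at any point gives $\norm{Th|_{L_2}}\le\frac{k\norm{T}+(n-k)\ep/2}{n}\le\ep$; this averaging is the only mechanism in the paper that produces smallness on an infinite set, and it is unavailable to you because you want (a) before (b). Second, the count in (b) is maintained by an invariant strictly stronger than ``at most $k$ large at each point'': condition \eqref{empty} demands, for each $s$, that the set where some $s$ of the functions built so far are simultaneously $\ep$-large avoid the derivative $L_2^{(\omega^{\alpha}\max\{k-s+1,0\})}$. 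This guarantees that every accumulated ``bad set'' is compact of height at most $\omega^{\alpha}$, so the next function $g_{i+1}$ can be made $T$-small, by $A(\alpha)$, on the union $M$ of the top derivative \emph{and all bad sets} (Lemma~\ref{l: maxscatt and open reduction}(a))---which is precisely what prevents the count from growing at the lower layers of $L_2$, the point where your construction breaks down. Without this restructuring---(b) proved with the layered invariant, (a) derived from (b) by averaging---your sequential scheme cannot be repaired.
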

	
	\begin{proof}
		
		Throughout the proof, for a function $g \in \C(K_1)$ and $\ep>0$ we will use $\{ \norm{Tg} \geq \ep \}$ as a shortcut for $\{ y \in K_2: \norm{Tg(y)} \geq \ep \}$.
		
		Further, for the sake of clarity we denote by $A(\alpha), B(\alpha)$ and $C(\alpha)$ the statements that the assertions (a), (b) and (c) respectively hold for each compact set $L_2 \subseteq K_2$ with $\Gamma(ht(L_2))\leq \omega^{\alpha}$ (or for each compact space $K_2$ satisfying $\Gamma(ht(K_2))\leq \omega^{\alpha}$ in the case of (c)). We proceed to prove simultaneously (a), (b) and (c) by transfinite induction on $\alpha$ based on the following scheme (note that the assertions $B(0)$ and $C(0)$ hold trivially, since if $\Gamma(ht(L_2)) \leq \omega^0=1$, then $ht(L_2)=1$, and hence the inequality $\omega^{\alpha} k < ht(L_2)$ is never satisfied).
		\begin{equation}
		\nonumber
		\begin{aligned}
		&A(0) \text{ holds}, \\&
		A(\alpha) \wedge B(\alpha) \Rightarrow B(\alpha+1),\\&
		A(\alpha) \wedge C(\alpha) \Rightarrow C(\alpha+1),\\&
		B(\alpha+1) \wedge A(\alpha) \Rightarrow A(\alpha+1), \\&
		A(\beta) \text{ for each } \beta <\alpha \Rightarrow A(\alpha), \text{ for } \alpha \text{ limit}, \\&
		B(\beta) \text{ for each } \beta <\alpha \Rightarrow B(\alpha), \text{ for } \alpha \text{ limit}, \text{ and }\\&
		C(\beta) \text{ for each } \beta <\alpha \Rightarrow C(\alpha), \text{ for } \alpha \text{ limit}.
		\end{aligned}
		\end{equation}
		
		To start with, let $L_2 \subseteq K_2$ be a a compact set with $\Gamma(ht(L_2))=1$. Thus $ht(L_2) \leq 1$, and hence, since $L_2$ is compact, $L_2$ is a finite set. If $\Gamma(ht(L_1))>1$, then $ht(L_1)>1$, and hence $L_1$ is infinite. Thus the assertion $A(0)$ follows by Lemma \ref{fin}.
		
		Now, assume that $A(\alpha)$ and $B(\alpha)$ hold. We want to prove $B(\alpha+1)$. Thus we pick a compact set $L_2 \subseteq K_2$ satisfying that $\Gamma(ht(L_2))=\omega^{\alpha+1}$. This means that there exists $k \in \en$ such that $\omega^{\alpha} k < ht(L_2) \leq \omega^{\alpha}(k+1)$. Let a set $L_1 \subseteq K_1$ satisfies $ht(L_1) > \omega^{\alpha} n$ for some $n>k$, $U$ be an open set containing $L_1$, and fix $\ep>0$. 
		
		\emph{Claim.
			For each $i=1, \ldots, n$ there exist functions
			$g_1, \ldots g_i$ in $\C(K_1, [0, 1])$ and points $x_1, \ldots, x_i \in K_1$
			such that
			\[1 \geq g_1 \geq g_2 \geq \ldots \geq g_i \geq 0,\] 
			and for each $j=1, \ldots, i$, $x_j \in L_1^{(\omega^{\alpha}(n-j))}$, $g_j=1$ on an open neighbourhood of $x_j$ and $g_j=0$ on $K_1 \setminus U$, and such that
			\begin{equation}
			\label{empty}
			\bigcup_{s=1}^{i} (L_2^{(\omega^{\alpha} \max\{k-s+1, 0\})} \cap \bigcup_{A\in[\{1,\dots,i\}]^s} \bigcap_{p \in A} \{ \norm{Tg_p} \geq \ep\})=\emptyset. 
			\end{equation}}
		
		\begin{proof}[Proof of Claim] \renewcommand\qedsymbol{$\square$}
			We proceed by finite induction. 
			
			To start with, we have $ht(L_1^{(\omega^{\alpha}(n-1))})> \omega^{\alpha}$, $ht(L_2^{(\omega^{\alpha} k)})\leq \omega^{\alpha}$. Thus \\ $\Gamma(ht(L_1^{(\omega^{\alpha}(n-1))}))> \omega^{\alpha}$ and $\Gamma(ht(L_2^{(\omega^{\alpha} k)})) \leq \omega^{\alpha}$. Hence by $A(\alpha)$, there exist a function $g_1 \in \C(K_1, [0, 1])$ and $x_1 \in L_1^{(\omega^{\alpha}(n-1))}$ such that $g_1=1$ on an open neighbourhood of $x_1$, $g_1=0$ on $K_1\setminus U$ and $\norm{Tg_1(y)}<\ep$ for each $y \in L_2^{(\omega^{\alpha} k)}$.
			
			Now, suppose that $1 \leq i<n$, and that we have found the functions $g_1, \ldots g_i$ in $\C(K_1, [0, 1])$ and points $x_1, \ldots, x_i \in K_1$
			such that \eqref{empty} holds for $i$, and satisfying all the other above conditions. 
			Hence we know that the set 
			
			\[	\bigcup_{s=1}^{i} (L_2^{(\omega^{\alpha} \max\{k-s+1, 0\})} \cap \bigcup_{A\in[\{1,\dots,i\}]^s} \bigcap_{p \in A} \{ \norm{Tg_p} \geq \ep\}) \]
			is empty. Thus for each $s=1, \ldots, i$, 
			\[ ht(L_2^{(\omega^{\alpha} \max\{k-s, 0\})} \cap \bigcup_{A\in[\{1,\dots,i\}]^s} \bigcap_{p \in A} \{ \norm{Tg_p} \geq \ep\})\leq\omega^{\alpha}.\]
			
			Thus, since $ht(L_2^{(\omega^{\alpha} k)})\leq \omega^{\alpha}$ by the assumption, if we denote 
			\[M=L_2^{(\omega^{\alpha} k)} \cup \bigcup_{s=1}^{i} (L_2^{(\omega^{\alpha} \max\{k-s, 0\})} \cap \bigcup_{A\in[\{1,\dots,i\}]^s} \bigcap_{p \in A} \{ \norm{Tg_p} \geq \ep\}),\] 
			then, by Lemma \ref{l: maxscatt and open reduction}(a),
			also $ht(M)\leq \omega^{\alpha}$. Notice also that the set $M$ is compact.
			
			Next, we know that we can find an open neighbourhood $V$ of $x_i$ contained in $U$ such that $g_i=1$ on $V$. 
			Since $x_i \in V \cap L_1^{(\omega^{\alpha}(n-i))}$, using Lemma \ref{l: maxscatt and open reduction}(b) and (c) on the sets $L_1^{(\omega^{\alpha}(n-i-1))}$ and $V$  we deduce that $ht(V \cap L_1^{(\omega^{\alpha}(n-i-1))}) > \omega^{\alpha}$.
			Thus by $A(\alpha)$, there exist a function $g_{i+1}$ and a point $x_{i+1} \in V \cap L_1^{(\omega^{\alpha}(n-i-1))}$ such that $g_{i+1}=1$ on an open neighbourhood of $x_{i+1}$, $g_{i+1}=0$ on $K_1 \setminus V$ and $\norm{Tg_{i+1}(y)}<\ep$ for each $y \in M$. Then $g_{i+1}\leq g_i$. Thus to finish the proof of the claim, it is now enough to check that \eqref{empty} holds for $g_1, \ldots, g_{i+1}$. To this end, we have 
			\begin{equation}
			\nonumber
			\begin{aligned}
			\emptyset&=\{\norm{Tg_{i+1}} \geq \ep\} \cap M=
			\{\norm{Tg_{i+1}} \geq \ep\} \cap (L_2^{(\omega^{\alpha} k)} \cup \\& \bigcup_{s=1}^{i} (L_2^{(\omega^{\alpha} \max\{k-s, 0\})} \cap \bigcup_{A\in[\{1,\dots,i\}]^s} \bigcap_{p \in A} \{ \norm{Tg_p} \geq \ep\}))
			=\\&=
			(\{\norm{Tg_{i+1}} \geq \ep\} \cap L_2^{(\omega^{\alpha} k)}) \cup \\& \bigcup_{s=1}^{i} (L_2^{(\omega^{\alpha} \max\{k-s, 0\})} \cap \bigcup_{A\in[\{1,\dots,i\}]^s} \bigcap_{p \in A} (\{ \norm{Tg_p} \geq \ep\} \cap \{\norm{Tg_{i+1}} \geq \ep\}))
			=
			\\&=
			(\{\norm{Tg_{i+1}} \geq \ep\} \cap L_2^{(\omega^{\alpha} k)}) \cup \\& \bigcup_{s=1}^{i} (L_2^{(\omega^{\alpha} \max\{k-s, 0\})} \cap \bigcup_{A\in[\{1,\dots,i\}]^{s}} \bigcap_{p \in A\cup\{i+1\}} \{ \norm{Tg_p} \geq \ep\})
			=\\&
			=
			\bigcup_{s=0}^{i} (L_2^{(\omega^{\alpha} \max\{k-s, 0\})} \cap \bigcup_{A\in[\{1,\dots,i\}]^{s}} \bigcap_{p \in A\cup\{i+1\}} \{ \norm{Tg_p} \geq \ep\})
			=\\&=
			\bigcup_{s=1}^{i+1} (L_2^{(\omega^{\alpha} \max\{k-s+1, 0\})} \cap \bigcup_{A\in[\{1,\dots,i\}]^{s-1}} \bigcap_{p \in A\cup\{i+1\}} \{ \norm{Tg_p} \geq \ep\}).
			\end{aligned}
			\end{equation}
			Thus recalling the inductive assumption we conclude that the set
			\begin{equation}
			\nonumber
			\begin{aligned}
			&\bigcup_{s=1}^{i+1} (L_2^{(\omega^{\alpha} \max\{k-s+1, 0\})} \cap \bigcup_{A\in[\{1,\dots,i+1\}]^{s}} \bigcap_{p \in A} \{ \norm{Tg_p} \geq \ep\})=\\&
			\bigcup_{s=1}^{i+1} (L_2^{(\omega^{\alpha} \max\{k-s+1, 0\})} \cap \bigcup_{A\in[\{1,\dots,i\}]^{s-1}} \bigcap_{p \in A\cup\{i+1\}} \{ \norm{Tg_p} \geq \ep\})
			\cup \\& \cup \bigcup_{s=1}^{i} (L_2^{(\omega^{\alpha} \max\{k-s+1, 0\})} \cap \bigcup_{A\in[\{1,\dots,i\}]^s} \bigcap_{p \in A} \{ \norm{Tg_p} \geq \ep\})
			\end{aligned}
			\end{equation}
			is empty. This finishes the induction step and the proof of the claim. 
		\end{proof}
		
		Now, we use the above claim applied for the case when $i=n$ to obtain the functions $g_1, \ldots, g_n$. Then from the formula \eqref{empty} in the special case when $s=k+1$ we obtain that the set 
		\[L_2 \cap \bigcup_{A\in[\{1,\dots n\}]^{k+1}} \bigcap_{p \in A} \{ \norm{Tg_p} \geq \ep\}\] is empty. Thus we can use Lemma \ref{norm}(a) to obtain a linear combination $h$ of the functions  $g_1, \ldots, g_n$ which satisfies $\norm{h}=1$ and \[\norm{Th|_{L_2}} \leq \frac{k\norm{T}+(n-k)\ep}{n}.\]
		Moreover, since each of the functions $g_i$ is constant $1$ on an open neighbourhood of the point $x_n \in L_1^{(0)}=L_1$, so is $h$. Finally, since each of the functions $g_i$ satisfies $g_i=0$ on $K_1 \setminus U$, so does the function $h$, which finishes the proof of $B(\alpha+1)$.
		
		Moreover, in the case when $L_1=K_1$ and $L_2=K_2$, if we assume that $C(\alpha)$ holds instead of $B(\alpha)$, an application of Lemma \ref{norm}(b) proves $C(\alpha+1)$.
		
		Next, suppose that $B(\alpha+1)$ and $A(\alpha)$ hold. We pick a compact set $L_2 \subseteq K_2$ satisfying $\Gamma(ht(L_2))=\omega^{\alpha+1}$, and an arbitrary set $L_1 \subseteq K_1$ satisfying $\Gamma(ht(L_1))>\Gamma(ht(L_2))$. Let $U$ be an open set containing $L_1$ and let $\ep>0$ be arbitrary. We find $k \in \en$ such that 
		$\omega^{\alpha} k < ht(L_2) \leq \omega^{\alpha}(k+1)$. Further, since $\Gamma(ht(L_1))>\Gamma(ht(L_2))=\omega^{\alpha+1}$, it follows that $ht(L_1)>\omega^{\alpha+1}$. Hence for each $n>k$, $ht(L_1)>\omega^{\alpha}n$, in particular for $n$ satisfying that \[\frac{k\norm{T}+(n-k)\frac{\ep}{2}}{n} \leq \ep.\]
		
		Hence by $B(\alpha+1)$, there exists a function $h \in \C(K_1, [0, 1])$ of norm $1$ and a point $x \in L_1$ such that $h=1$ on an open neighbourhood of $x$, $h=0$ on $K_1 \setminus U$ and \[\norm{Th|_{L_2}} \leq \frac{k\norm{T}+(n-k)\frac{\ep}{2}}{n} \leq \ep,\]
		which proves $A(\alpha+1)$.
		
		Finally, notice that the limit steps are trivial. This follows from the fact that, since $ht(L_2)$ is a successor ordinal for each compact set $L_2 \subseteq K_2$, if $\Gamma(ht(L_2)) \leq \omega^\alpha$ for some limit ordinal $\alpha$, then there exists $\beta<\alpha$ such that $\Gamma(ht(L_2)) \leq \omega^\beta$. The proof is finished. \end{proof}
	
	The proof of our main result now follows promptly from Proposition \ref{main}.
	
	\begin{proof}[Proof of  Theorem \ref{theresult}.] We recall that the lower bound $3$ is known, see e.g. \cite[Theorem 1.1]{rondos-scattered-subspaces}. The bound $\frac{2n-k}{k}$ follows immediately from Proposition \ref{main}(c), and the "in particular" statement can be easily deduced from this estimate, or alternatively, if follows directly from Proposition \ref{main}(a). \end{proof}

	
	\bibliography{bib}\bibliographystyle{siam}
\end{document}